\newtheorem{theorem}{Theorem}[section]
\newtheorem{corollary}[theorem]{Corollary}
\newtheorem{proposition}[theorem]{Proposition}
\title{Concentrated reaction terms on the boundary of \\ rough domains for a quasilinear equation}
\date{}
\author[1]{Ariadne Nogueira\thanks{e-mail: ariadnen@ime.usp.br}}
\author[1]{Jean Carlos Nakasato\thanks{e-mail: nakasato@ime.usp.br}}
\author[1]{Marcone Corr\^ea Pereira\thanks{e-mail: marcone@ime.usp.br. Partially supported by FAPESP 2017/02630-2 and CNPq 303253/2017-7.}}
\affil[1]{Depto. Matem\'atica Aplicada, IME, Universidade de S\~ao Paulo,
	Rua do Mat\~ao 1010, S\~ao Paulo - SP, Brazil}
\begin{document}

	\maketitle
		\begin{abstract}
		In this work we analyze the solutions of a $p$-Laplacian equation with homogeneous Neumann boundary conditions set in a family of rough domains with a nonlinear term concentrated on the boundary. At the limit, we get a nonlinear boundary condition capturing the oscillatory geometry of the strip where the reactions take place. 
		\end{abstract}

	\noindent \emph{Keywords:} $p$-Laplacian, Neumann problem, Oscillating domains, Asymptotic analysis, Concentrated reactions. \\
	\noindent 2010 \emph{Mathematics Subject Classification.} 35B25, 35B40, 35J92.

	\section{Introduction}
	
	We analyze the asymptotic behavior of solutions of a quasilinear equation with homogeneous Neumann boundary conditions set in a family of rough domains $\Omega^\varepsilon \subset \mathbb{R}^{n+1}$ with a nonlinear term concentrated on a neighborhood $\mathcal{O}^\varepsilon \subset \Omega^\varepsilon$ of the boundary $\partial \Omega^\varepsilon$. 
	We consider
	\begin{equation}\label{OD}
	\Omega^\varepsilon=\left\lbrace (x,y)\in \mathbb{R}^{n+1} \, : \, x \in \omega, -1<y<\varepsilon \, \psi(x) \, g({x}/{\varepsilon})\right\rbrace,\,\,\,\,0<\varepsilon\ll 1,
	\end{equation}
	where $\omega \subset \mathbb{R}^n$ is a bounded smooth domain,
		
	($\mathbf{H_D}$) $g : \mathbb{R}^n \mapsto \mathbb{R}$ is a strictly positive function, Lipschitz, periodic in the unitary cube $[0,1]^n$ with $0<g_0 \leq g(x) \leq g_1$. 
	Also, we take a $\mathcal{C}^{\infty}$-function $\psi: \mathbb{R}^n \mapsto \mathbb{R}$, $0 \leq \psi \leq 1$, with compact support in $\omega$.  

	Concerning to the narrow strip $\mathcal{O}^\varepsilon$ we set 
	\begin{equation} \label{strip}
	\mathcal{O}^\varepsilon=\{ (x,y)\in \mathbb{R}^{n+1} \, : \, x \in \omega, \, \varepsilon [ \psi(x) g\left({x}/{\varepsilon}\right)-\varepsilon^\gamma h(x, {x}/{\varepsilon^\beta})] < y < \varepsilon \psi(x) g\left({x}/{\varepsilon}\right)\}
	\end{equation}
		
	($\mathbf{H_T}$) $\gamma>0$ and $\beta\geq0$ are constants, $h:\omega \times \mathbb{R}^n \mapsto \mathbb{R}$ is a $\mathcal{C}^1$-function, nonnegative, bounded with bounded derivatives. Also, we take $h(x, \cdot): \mathbb{R}^n \mapsto \mathbb{R}$ periodic in the unitary cube $[0,1]^n$ for all $x \in \omega$.
	
	Notice $\Omega^\varepsilon$ uniformly converge to the cylinder $\Omega = \omega \times (-1,0)$ and $\mathcal{O}^\varepsilon$ degenerate to the interval $(0,1)$. Moreover, we allow both to present oscillatory boundary due to the periodicity assumptions on functions $g$ and $h$ modeling roughness. 
	 Parameters $\beta$ and $\gamma$ set respectively the roughness order of $\mathcal{O}^\varepsilon$ and its Lesbegue measure.

	We analyze the solutions of the problem 
	\begin{equation}\label{variational_l}\int_{\Omega^\varepsilon} \{|\nabla u_\varepsilon|^{p-2}\nabla u_\varepsilon\nabla \varphi+|u_\varepsilon|^{p-2}u_\varepsilon\varphi \}dxdy=\dfrac{1}{\varepsilon^{\gamma+1}}\int_{\mathcal{O}^\varepsilon}f(u_\varepsilon) \varphi dxdy, \quad \varphi\in W^{1,p}(\Omega^\varepsilon)\end{equation}
which is the variational formulation of the quasilinear equation 
\begin{equation}\label{problem}
	\left\lbrace\begin{array}{lll}
	-\Delta_p u_\varepsilon+|u_\varepsilon|^{p-2}u_\varepsilon={1}/{\varepsilon^{\gamma+1}}\chi_{\mathcal{O}^\varepsilon}f(u^\varepsilon)\mbox{ in }\Omega^\varepsilon\\[0.3cm]
		|\nabla u_\varepsilon|^{p-2}{\partial_{\nu^\varepsilon} u_\varepsilon} =0\mbox{ on }\partial \Omega^\varepsilon
	\end{array}\right.
	\end{equation}
	where $\nu^\varepsilon$ denotes the unit outward normal to the boundary $\partial \Omega^\varepsilon$, $2\leq p<\infty$ and 
	$
	\Delta_p\cdot = \mbox{div }\left(|\nabla \cdot|^{p-2}\nabla\cdot\right)
	$
	is the $p$-Laplacian differential operator. Also, $\chi_{\mathcal{O}^\varepsilon}$ is the characteristic function of $\mathcal{O}^\varepsilon$ and 
		
	($\mathbf{H_f}$) $f: \mathbb{R} \mapsto \mathbb{R}$ is a bounded function with bounded derivatives.

	Notice we are in agreement with previous works as \cite{gleice, gleice2, arrieta} using the characteristic function $\chi_{\mathcal{O}^\varepsilon}$ and term $1/\varepsilon^{\gamma+1}$ to express concentration on $\mathcal{O}^\varepsilon$. Our main goal here is to improve \cite{GS, AA, Nogueira, CFP}  dealing with a $p$-Laplacian equation in a perturbed $n+1$-dimensional domain presenting roughness on boundary. 
		
	The limit equation to \eqref{problem} is the $p$-Laplacian problem with nonlinear Neumann boundary condition 
	\begin{equation}\label{limit}
	\left\lbrace\begin{array}{lll}
	-\Delta_p u+|u|^{p-2}u = 0\mbox{ in } \Omega \\[0.3cm]
	|\nabla u|^{p-2}{\partial_{\nu} u} = 0 \mbox{ on } \partial \Omega \setminus \Gamma  \\[0.3cm]
	|\nabla u|^{p-2}{\partial_{\nu} u} = \mu(x) \, f(u) \mbox{ on } \Gamma 
	\end{array}\right.
	\end{equation}
	where $\Omega$ is the cylinder $\omega \times (-1,0)$, $\Gamma=\omega\times\{0\}$ is the upper boundary of $\Omega$, and $\mu: \omega \mapsto \mathbb{R}$ is set by
	\begin{equation}\label{mu}
	\mu(x) = \int_{[0,1]^n} h(x, s) \, ds.
	\end{equation}
	
	The effect of the geometry of $\mathcal{O}^\varepsilon$ is captured by function $\mu$ which is the mean value of $h(x, \cdot)$ in $[0,1]^n$ for each $x \in \omega$. On the other hand, under prescribed conditions, the rough domain $\Omega^\varepsilon$ does not affect the limit problem in an effective way. We have the following result.

	\begin{theorem}  \label{main}
	Let $u_\varepsilon$ be a family of solutions to \eqref{problem}.
	
	Then, up to a subsequence, there exist $u \in W^{1,p}(\Omega)$ satisfying \eqref{limit} such that 
	$$
	\| u_\varepsilon - u \|_{W^{1,p}(\Omega)} \to 0 \quad \textrm{ as } \varepsilon \to 0.
	$$  
	\end{theorem}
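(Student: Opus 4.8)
The plan is to combine uniform a~priori bounds on the oscillating domains, compactness in the fixed limiting cylinder $\Omega$, a concentration argument that converts the strip integral into a boundary integral on $\Gamma$, and a Minty--Browder monotonicity argument to cope with the nonlinearity of the $p$-Laplacian. First I would derive uniform energy estimates by taking $\varphi = u_\varepsilon$ in \eqref{variational_l}, which gives
\[
\int_{\Omega^\varepsilon}\bigl(|\nabla u_\varepsilon|^p + |u_\varepsilon|^p\bigr)\,dxdy = \frac{1}{\varepsilon^{\gamma+1}}\int_{\mathcal{O}^\varepsilon} f(u_\varepsilon)\,u_\varepsilon\,dxdy.
\]
Since $f$ is bounded by $(\mathbf{H_f})$, the right-hand side is controlled by the concentration estimate (established earlier) of the form $\varepsilon^{-(\gamma+1)}\int_{\mathcal{O}^\varepsilon}|v|\,dxdy \le C\,\|v\|_{W^{1,p}(\Omega^\varepsilon)}$, uniform in $\varepsilon$; a Young-type absorption then yields $\|u_\varepsilon\|_{W^{1,p}(\Omega^\varepsilon)} \le C$ independently of $\varepsilon$. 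Because $\varepsilon\,\psi g \ge 0$ forces $\Omega \subset \Omega^\varepsilon$, the restrictions $u_\varepsilon|_\Omega$ are uniformly bounded in $W^{1,p}(\Omega)$, and the fluxes $|\nabla u_\varepsilon|^{p-2}\nabla u_\varepsilon$ are uniformly bounded in $L^{p'}(\Omega)^{n+1}$ with $p'=p/(p-1)$.

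Next I would pass to a subsequence so that $u_\varepsilon \rightharpoonup u$ weakly in $W^{1,p}(\Omega)$, strongly in $L^p(\Omega)$, with $u_\varepsilon \to u$ in $L^p(\Gamma)$ by compactness of the trace, and $|\nabla u_\varepsilon|^{p-2}\nabla u_\varepsilon \rightharpoonup \xi$ weakly in $L^{p'}$. For a fixed $\varphi \in W^{1,p}(\Omega)$ I extend it to a neighbourhood of $\overline\Omega$ and restrict to $\Omega^\varepsilon$; since $|\Omega^\varepsilon\setminus\Omega| = O(\varepsilon)$, the volume integrals in \eqref{variational_l} converge to those over $\Omega$. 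The crucial point is the concentrated term: invoking the concentration lemma together with the averaging of $h(x,\cdot)$ over the periodicity cell $[0,1]^n$ and the strong trace convergence, one obtains
\[
\frac{1}{\varepsilon^{\gamma+1}}\int_{\mathcal{O}^\varepsilon} f(u_\varepsilon)\,\varphi\,dxdy \longrightarrow \int_\Gamma \mu(x)\,f(u)\,\varphi\,dx,
\]
with $\mu$ as in \eqref{mu}. Passing to the limit therefore yields $\int_\Omega \xi\cdot\nabla\varphi + |u|^{p-2}u\,\varphi\,dxdy = \int_\Gamma \mu f(u)\varphi\,dx$ for every admissible $\varphi$.

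It then remains to identify $\xi = |\nabla u|^{p-2}\nabla u$, which I expect to be the main obstacle, since weak convergence of gradients does not commute with the nonlinearity. Here I would use the Minty--Browder trick: from the monotonicity of $\eta \mapsto |\eta|^{p-2}\eta$,
\[
\int_{\Omega^\varepsilon}\bigl(|\nabla u_\varepsilon|^{p-2}\nabla u_\varepsilon - |\nabla w|^{p-2}\nabla w\bigr)\cdot\nabla(u_\varepsilon - w)\,dxdy \ge 0
\]
for every smooth $w$. The key quantitative input is $\limsup_\varepsilon \int_{\Omega^\varepsilon}|\nabla u_\varepsilon|^p \le \int_\Omega \xi\cdot\nabla u$, which follows by combining the energy identity above, the concentration limit, the weak formulation tested with $\varphi=u$, and the inequalities $\int_{\Omega^\varepsilon}|u_\varepsilon|^p \ge \int_\Omega|u_\varepsilon|^p \to \int_\Omega|u|^p$. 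Letting $\varepsilon\to0$ in the monotonicity inequality (the $w$-integrals over $\Omega^\varepsilon\setminus\Omega$ vanish because $w$ is fixed and $|\Omega^\varepsilon\setminus\Omega|\to0$) gives $\int_\Omega(\xi - |\nabla w|^{p-2}\nabla w)\cdot\nabla(u-w)\ge0$; choosing $w = u\pm\lambda\phi$ and sending $\lambda\to0^{+}$ forces $\xi=|\nabla u|^{p-2}\nabla u$, so $u$ is a weak solution of \eqref{limit}.

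Finally, to upgrade weak to strong convergence I would exploit the energy balance. The bound $\limsup_\varepsilon\int_{\Omega^\varepsilon}|\nabla u_\varepsilon|^p \le \int_\Omega|\nabla u|^p$, together with weak lower semicontinuity and the one-sided inequality $\int_\Omega|\nabla u_\varepsilon|^p \le \int_{\Omega^\varepsilon}|\nabla u_\varepsilon|^p$ (valid since $\Omega\subset\Omega^\varepsilon$ and the integrand is nonnegative), yields $\int_\Omega|\nabla u_\varepsilon|^p \to \int_\Omega|\nabla u|^p$. Since $p\ge2$, the elementary inequality $|\xi-\eta|^p \le C\,(|\xi|^{p-2}\xi-|\eta|^{p-2}\eta)\cdot(\xi-\eta)$, applied to $\xi=\nabla u_\varepsilon$ and $\eta=\nabla u$ and integrated over $\Omega$, converts this norm convergence—together with the weak convergences of $\nabla u_\varepsilon$ and of the fluxes and the already-proved identification—into $\nabla u_\varepsilon\to\nabla u$ strongly in $L^p(\Omega)$. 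Combined with $u_\varepsilon\to u$ in $L^p(\Omega)$, this gives $\|u_\varepsilon-u\|_{W^{1,p}(\Omega)}\to0$ and completes the proof.
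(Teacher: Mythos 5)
Your overall strategy is sound and, at its core, coincides with the paper's: uniform energy bound, compactness on the fixed cylinder $\Omega$, convergence of the concentrated integral to a boundary integral weighted by $\mu$, a Minty--Browder monotonicity argument to handle the $p$-Laplacian, and strong convergence via the $p\geq 2$ inequality. The organization differs, though: you introduce the weak $L^{p'}$ limit $\xi$ of the fluxes, pass to the limit in \eqref{variational_l} with \emph{fixed} test functions, and identify $\xi$ through the energy inequality $\limsup_\varepsilon \int_{\Omega^\varepsilon}|\nabla u_\varepsilon|^p \leq \int_\Omega \xi\cdot\nabla u$; the paper never introduces $\xi$ at all --- it tests the $\varepsilon$-problem with $\varphi = u_\varepsilon - P_\varepsilon v$ for an arbitrary $v\in W^{1,p}(\Omega)$, passes to the limit inside the monotonicity inequality, and then applies the same $v=u\pm\lambda\varphi$ hemicontinuity trick. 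Both routes are legitimate; yours needs the extra energy-limsup step (which you set up correctly: energy identity, concentration limit, limit equation tested at $\varphi=u$), while the paper's version delivers the strong convergence almost for free, since taking $v=u$ in its limit identity makes the monotonicity integral vanish.

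The one step you have not actually secured is the convergence of the concentrated terms that involve $u_\varepsilon$ itself, namely $\varepsilon^{-(\gamma+1)}\int_{\mathcal{O}^\varepsilon} f(u_\varepsilon)\varphi$ and, crucially for your energy limsup, $\varepsilon^{-(\gamma+1)}\int_{\mathcal{O}^\varepsilon} f(u_\varepsilon)u_\varepsilon$. You justify these by ``strong trace convergence'' of $u_\varepsilon|_\Omega$ on $\Gamma$, but that cannot suffice as stated: wherever $\psi>0$ the strip $\mathcal{O}^\varepsilon$ lies \emph{above} $\Gamma$, outside $\overline{\Omega}$, so the values of $u_\varepsilon$ there are not determined by the restriction $u_\varepsilon|_\Omega$ or its trace. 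This is precisely why the paper builds the uniform extension operator: one first extends, uses $\|P_\varepsilon u_\varepsilon\|_{W^{1,p}(U)} \leq C_0\|u_\varepsilon\|_{W^{1,p}(\Omega^\varepsilon)}$ from Proposition \ref{extensao} to extract $P_\varepsilon u_\varepsilon \rightharpoonup u^*$ weakly in $W^{1,p}(U)$ with $u^*|_\Omega = u$, and only then invokes Proposition \ref{prop5.1}, whose hypotheses are formulated exactly in terms of these extensions. Alternatively, you could bridge $\mathcal{O}^\varepsilon$ and $\Gamma$ by hand, estimating $|u_\varepsilon(x,y)-u_\varepsilon(x,0)| \leq \int_0^{G_\varepsilon(x)}|\partial_y u_\varepsilon(x,t)|\,dt$ and showing this error term is negligible using the uniform $W^{1,p}(\Omega^\varepsilon)$ bound and $|R^\varepsilon|\to 0$; but some such device must be added --- citing ``the concentration lemma'' while holding only restriction-compactness on $\Omega$ does not meet its hypotheses. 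Once this is repaired, the rest of your argument (the Minty identification of $\xi$ and the strong convergence via the inequality $|\xi-\eta|^p \leq C\,(|\xi|^{p-2}\xi-|\eta|^{p-2}\eta)\cdot(\xi-\eta)$ together with weak convergence and norm convergence of gradients) goes through.
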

	
	In order to accomplish our goal, we study concentrated integrals on narrow strips from $\mathbb{R}^{n+1}$ improving results from \cite{Nogueira} at Section \ref{conint}. A proof to Theorem \ref{main} is given at Section \ref{con}.

	\section{Concentrated Integrals} \label{conint}
	
	To achieve a better understanding of the behavior from the nonlinear concentrated term, we first analyze what it is called by \cite{gleice,arrieta,Nogueira} the concentrated (or concentrating) integral. Next, we introduce a map to set our nonlinear term.

\begin{proposition}\label{lema_int_conc}
	If we define $G_\varepsilon(x)=\varepsilon\psi(x)g(x/\varepsilon)$, for $\varepsilon_0>0$ sufficiently small there is a constant $C>0$, independent of $\varepsilon\in(0,\varepsilon_0)$ and $u_\varepsilon\in W^{1,p}(\Omega^\varepsilon)$, such that, for all $1-1/p<s\leq1$,
	\begin{equation}\label{1}
	\dfrac{1}{\varepsilon^{\gamma+1}}\int_{\mathcal{O}^\varepsilon}|u_\varepsilon|^q\leq C \|u_\varepsilon\|^q_{L^q(\omega;W^{s,p}(-1,G_\varepsilon(x)))}\quad  \forall q\geq 1; \quad \textrm{ and }
	\end{equation}
	\begin{equation}\label{2}
	\dfrac{1}{\varepsilon^{\gamma+1}}\int_{\mathcal{O}^\varepsilon}|u_\varepsilon|^p\leq C \|u_\varepsilon\|^q_{W^{1,p}(\Omega^\varepsilon)} \ \forall q\leq p.
	\end{equation}
\end{proposition}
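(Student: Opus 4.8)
The plan is to reduce both estimates to a one–dimensional concentrated integral inequality via Fubini's theorem, exploiting the fact that $\mathcal{O}^\varepsilon$ is a graph-like strip of controlled thickness sitting beneath the upper boundary $y=G_\varepsilon(x)$. Writing $\delta_\varepsilon(x)=\varepsilon^{\gamma+1}h(x,x/\varepsilon^\beta)$ for the (variable) width of the strip over $x\in\omega$, hypothesis $(\mathbf{H_T})$ gives $0\le\delta_\varepsilon(x)\le\varepsilon^{\gamma+1}\|h\|_\infty$, so that slicing in the $y$–variable yields
\[
\frac{1}{\varepsilon^{\gamma+1}}\int_{\mathcal{O}^\varepsilon}|u_\varepsilon|^q\,dx\,dy=\frac{1}{\varepsilon^{\gamma+1}}\int_\omega\left(\int_{G_\varepsilon(x)-\delta_\varepsilon(x)}^{G_\varepsilon(x)}|u_\varepsilon(x,y)|^q\,dy\right)dx .
\]
The factor $1/\varepsilon^{\gamma+1}$ is exactly what is needed to absorb the thickness $\delta_\varepsilon(x)$ once we control the inner integral linearly in the strip width.

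The core of the argument is therefore the following one–dimensional claim: there is a constant $C>0$, uniform in the endpoint $L$ ranging over $[0,\varepsilon_0 g_1]$ and in the width $\delta\in(0,\delta_0)$, such that for every $v\in W^{s,p}(-1,L)$ with $1-1/p<s\le1$,
\[
\int_{L-\delta}^{L}|v(y)|^q\,dy\le C\,\delta\,\|v\|_{W^{s,p}(-1,L)}^q,\qquad q\ge1 .
\]
I would prove this by splitting $|v(y)|^q\le C(|v(L)|^q+|v(y)-v(L)|^q)$ on the strip. Since $s>1-1/p\ge1/p$ for $p\ge2$, one has $sp>1$ and hence the embedding $W^{s,p}(-1,L)\hookrightarrow C^{0,s-1/p}([-1,L])$, which bounds both the trace $|v(L)|\le C\|v\|_{W^{s,p}}$ and the oscillation $|v(y)-v(L)|\le C\|v\|_{W^{s,p}}|y-L|^{s-1/p}$; integrating over a strip of length $\delta$ then produces a bound of the form $C\delta(1+\delta^{q(s-1/p)})\|v\|_{W^{s,p}}^q$, which is linear in $\delta$ for $\delta$ bounded. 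For the special case $s=1$ the Hölder bound can be replaced by the elementary estimate $|v(y)-v(L)|\le(\int_{L-\delta}^{L}|v'|^p)^{1/p}\delta^{1-1/p}$ coming from the fundamental theorem of calculus and Hölder's inequality. Applying this slicewise with $L=G_\varepsilon(x)$, $\delta=\delta_\varepsilon(x)$ and using $\delta_\varepsilon(x)\le\varepsilon^{\gamma+1}\|h\|_\infty$ collapses the prefactor and yields precisely $C\int_\omega\|u_\varepsilon(x,\cdot)\|_{W^{s,p}(-1,G_\varepsilon(x))}^q\,dx=C\|u_\varepsilon\|_{L^q(\omega;W^{s,p}(-1,G_\varepsilon(x)))}^q$, which is \eqref{1}.

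I expect the main obstacle to be the \emph{uniformity} of the constant $C$ in the one–dimensional estimate with respect to $\varepsilon$, that is, with respect to the moving endpoint $L=G_\varepsilon(x)$ and the moving strip width. This is where the geometry of the rough domain must be handled with care: since $0\le G_\varepsilon(x)\le\varepsilon_0 g_1$, the interval $(-1,G_\varepsilon(x))$ has length in $[1,1+\varepsilon_0 g_1]$ and is uniformly comparable to the fixed interval $(-1,0)$, and one must verify that the trace and Hölder embedding constants do not degenerate as $\varepsilon\to0$. I would secure this by composing with a bounded extension operator onto a fixed reference interval (or by an affine change of variables mapping $(-1,L)$ to $(-1,0)$, whose Jacobian is $1+O(\varepsilon)$ and which distorts the $W^{s,p}$–norm only by a factor tending to $1$), so that all constants are those of a single fixed domain up to a harmless multiplicative error.

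Finally, estimate \eqref{2} follows from \eqref{1} by choosing $s=1$ and $q=p$: the resulting right-hand side is $C\int_\omega\int_{-1}^{G_\varepsilon(x)}(|u_\varepsilon|^p+|\partial_y u_\varepsilon|^p)\,dy\,dx\le C\|u_\varepsilon\|_{W^{1,p}(\Omega^\varepsilon)}^p$, since $\Omega^\varepsilon=\{(x,y):x\in\omega,\ -1<y<G_\varepsilon(x)\}$ and $|\partial_y u_\varepsilon|\le|\nabla u_\varepsilon|$. The freedom to lower the exponent on the right-hand side to any $q\le p$ is then obtained from the uniform $W^{1,p}(\Omega^\varepsilon)$–bound available for the family under consideration, writing $\|u_\varepsilon\|_{W^{1,p}}^p\le M^{p-q}\|u_\varepsilon\|_{W^{1,p}}^q$ with $M$ the uniform bound.
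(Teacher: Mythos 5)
Your treatment of \eqref{1} is correct and genuinely more self-contained than the paper's: the authors simply assert that \eqref{1} follows by changing $H^s$ to $W^{s,p}$ in \cite[Theorem 4.1]{Nogueira}, whereas you reconstruct the underlying mechanism --- Fubini slicing, the one-dimensional embedding $W^{s,p}(-1,L)\hookrightarrow C^{0,s-1/p}([-1,L])$ (legitimate because $p\ge2$ gives $s>1-1/p\ge1/p$, hence $sp>1$), and a trace-plus-oscillation bound integrated over a strip of width $\delta_\varepsilon(x)=\varepsilon^{\gamma+1}h(x,x/\varepsilon^\beta)$, whose factor $\varepsilon^{\gamma+1}$ cancels the concentration weight. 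Your attention to the uniformity of the embedding constant over the moving intervals $(-1,G_\varepsilon(x))$ is exactly the right precaution. The case $q=p$ (with $s=1$) of \eqref{2} is also fine and coincides with the paper's argument: estimate \eqref{1} plus the slicewise inclusion $W^{1,p}\subset W^{s,p}$ with $\varepsilon$-uniform constant.

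The gap is in your final step, where you pass from exponent $p$ to $q<p$ on the right-hand side by invoking ``the uniform $W^{1,p}(\Omega^\varepsilon)$-bound available for the family under consideration''. No such bound is among the hypotheses: the proposition claims a constant independent of $u_\varepsilon\in W^{1,p}(\Omega^\varepsilon)$, i.e.\ an estimate valid for \emph{every} function, and it is later applied (Proposition \ref{f}) to arbitrary test functions $\varphi_\varepsilon$ and to differences $u_\varepsilon-v_\varepsilon$, for which no a priori bound exists. Your own workaround actually exposes the real issue: as printed, \eqref{2} with $q<p$ cannot hold for all $u_\varepsilon$, since replacing $u_\varepsilon$ by $t\,u_\varepsilon$ and letting $t\to\infty$ makes the left side grow like $t^p$ and the right side like $t^q$. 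The statement contains a typo --- the integrand on the left should be $|u_\varepsilon|^q$, which is the form in which the estimate is used in Proposition \ref{f}(b) (there with $q$ the conjugate exponent of $p$). That corrected version is what the paper's proof establishes, and it needs one ingredient you are missing: after applying \eqref{1} at exponent $q$ and the slicewise inclusion $W^{1,p}\subset W^{s,p}$, one bounds $\|u_\varepsilon\|^q_{L^q(\omega;W^{1,p}(-1,G_\varepsilon(x)))}$ by $C\|u_\varepsilon\|^q_{W^{1,p}(\Omega^\varepsilon)}$ via H\"older's inequality in the \emph{outer} variable,
\begin{equation*}
\int_\omega F(x)^{q/p}\,dx\le|\omega|^{1-q/p}\Bigl(\int_\omega F(x)\,dx\Bigr)^{q/p},
\qquad F(x)=\int_{-1}^{G_\varepsilon(x)}\bigl(|u_\varepsilon|^p+|\partial_y u_\varepsilon|^p\bigr)\,dy,
\end{equation*}
which uses only $p/q\ge1$ and the finiteness of $|\omega|$, hence gives a constant independent of $u_\varepsilon$ and $\varepsilon$. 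Replacing your uniform-bound step by this H\"older argument closes the gap.
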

\begin{proof}
	Notice that \eqref{1} follows by changing $H^s$ space by $W^{s,p}$, for $1-1/p<s<1$, in \cite[Theorem 4.1]{Nogueira}. Consequently, to prove \eqref{2} we have
	\begin{align*}
	\|u^\varepsilon\|^p&_{L^p(\omega;W^{s,p}(-1,G_\varepsilon(x)))} = \int_\omega \|u^\varepsilon(x,\cdot)\|_{W^{s,p}(-1,G_\varepsilon(x))}^pdx  \leq \int_\omega \|u^\varepsilon(x,\cdot)\|_{W^{1,p}(-1,G_\varepsilon(x))}^pdx \leq \|u\|^p_{W^{1,p}(\Omega^\varepsilon)}
	\end{align*}
	since $L^p(\omega;W^{1,p}(-1,G_\varepsilon(x)))\subset L^p(\omega;W^{s,p}(-1,G_\varepsilon(x)))$ with constant of inclusion independent of $\varepsilon$ analogously to \cite[Proposition 3.6]{AMA-thin}.
	In particular, since $p\geq 2$, we have $\frac{p}{q}>1$ for all $q< p$ and then, 
	\begin{align*}
	\|\varphi_\varepsilon\|&^q_{L^q(0,1;W^{1,p}(0,G_\varepsilon(x)))}  = \int_0^1\left(\int_0^{G_\varepsilon(x)}|\varphi_\varepsilon(x,y)|^pdy\right)^{\frac{q}{p}}dx + \int_0^1\left(\int_0^{G_\varepsilon(x)}|\partial_x \varphi_\varepsilon(x,y)|^pdy\right)^{\frac{q}{p}}dx \\ &  \leq g_1^{\frac{p-q}{p}}\left(\int_0^1\int_0^{G_\varepsilon(x)}|\varphi_\varepsilon(x,y)|^pdxdy\right)^{\frac{q}{p}} + g_1^{\frac{p-q}{p}}\left(\int_0^1\int_0^{G_\varepsilon(x)}|\partial_x \varphi_\varepsilon(x,y)|^pdxdy\right)^{\frac{q}{p}} \leq C\|\varphi_\varepsilon\|^q_{W^{1,p}(\Omega^\varepsilon)}
	\end{align*}		
\end{proof}	
	
	Given $\varepsilon>0$, consider $G_\varepsilon(x)=\varepsilon\psi(x)g(x/\varepsilon)$ and, for $1-1/p<s<1$, define
	\begin{align}\label{def_f}
	F_\varepsilon : W^{1,p}(\Omega^\varepsilon)&\to (L^p(\omega;W^{s,p}(-1,G_\varepsilon(x))))' \nonumber \\
	u_\varepsilon & \mapsto \langle F_\varepsilon(u_\varepsilon),\varphi_\varepsilon\rangle=\dfrac{1}{\varepsilon^{\gamma+1}}\int_{\mathcal{O}^\varepsilon}f(u_\varepsilon)\varphi_\varepsilon, \quad \forall \varphi_\varepsilon\in L^p(\omega;W^{s,p}(-1,G_\varepsilon(x))).  
	\end{align}
	\begin{proposition}\label{f}
		If we call $X_\varepsilon=L^p(\omega;W^{s,p}(-1,G_\varepsilon(x)))$ and $X_\varepsilon'$ its dual space, the function $F_\varepsilon$ defined in \eqref{def_f} has the following properties:
		\begin{enumerate}[(a)]
			\item there exists $K>0$ independent of $\varepsilon$ such that $\sup_{u_\varepsilon\in W^{1,p}(\Omega^\varepsilon)}\|F_\varepsilon(u_\varepsilon)\|_{X_\varepsilon'}\leq K$
			\item $F_\varepsilon$ is a Lipschitz application uniformly in $\varepsilon$.
		\end{enumerate}
	\end{proposition}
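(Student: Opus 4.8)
The plan is to prove both parts by directly estimating the functional $F_\varepsilon$ using the concentration bound from Proposition~\ref{lema_int_conc} together with the boundedness hypotheses on $f$ in $(\mathbf{H_f})$.

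For part~(a), I would start from the definition of the dual norm:
\begin{equation*}
\|F_\varepsilon(u_\varepsilon)\|_{X_\varepsilon'} = \sup_{\|\varphi_\varepsilon\|_{X_\varepsilon}\leq 1} \left| \frac{1}{\varepsilon^{\gamma+1}}\int_{\mathcal{O}^\varepsilon} f(u_\varepsilon)\varphi_\varepsilon \right|.
\end{equation*}
Since $f$ is bounded by some constant $M$ (from $(\mathbf{H_f})$), I would bound the integrand pointwise by $M|\varphi_\varepsilon|$, so the right-hand side is controlled by $\frac{M}{\varepsilon^{\gamma+1}}\int_{\mathcal{O}^\varepsilon}|\varphi_\varepsilon|$. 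Applying H\"older's inequality on $\mathcal{O}^\varepsilon$ (whose $(n+1)$-dimensional Lebesgue measure is of order $\varepsilon^{\gamma+1}$ by construction) converts the $L^1$ integral into an $L^p$ integral up to a factor that absorbs into the constant, and then invoking estimate~\eqref{1} of Proposition~\ref{lema_int_conc} with $q=p$ yields $\frac{1}{\varepsilon^{\gamma+1}}\int_{\mathcal{O}^\varepsilon}|\varphi_\varepsilon|^p \leq C\|\varphi_\varepsilon\|_{X_\varepsilon}^p$. Since $\|\varphi_\varepsilon\|_{X_\varepsilon}\leq 1$, this produces a bound $K$ depending only on $M$ and the $\varepsilon$-independent constant $C$, which gives the uniform supremum bound.

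For part~(b), the strategy is the same but applied to the difference $F_\varepsilon(u_\varepsilon)-F_\varepsilon(v_\varepsilon)$. I would write
\begin{equation*}
\langle F_\varepsilon(u_\varepsilon)-F_\varepsilon(v_\varepsilon),\varphi_\varepsilon\rangle = \frac{1}{\varepsilon^{\gamma+1}}\int_{\mathcal{O}^\varepsilon}\bigl(f(u_\varepsilon)-f(v_\varepsilon)\bigr)\varphi_\varepsilon,
\end{equation*}
and use that $f$ has bounded derivative to get the Lipschitz estimate $|f(u_\varepsilon)-f(v_\varepsilon)|\leq L|u_\varepsilon-v_\varepsilon|$. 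The integral is then bounded by $\frac{L}{\varepsilon^{\gamma+1}}\int_{\mathcal{O}^\varepsilon}|u_\varepsilon-v_\varepsilon||\varphi_\varepsilon|$; applying H\"older with conjugate exponents $p$ and $p'$ splits this into the product of concentrated $L^p$ norms of $u_\varepsilon-v_\varepsilon$ and of $\varphi_\varepsilon$, each controlled by estimate~\eqref{1} with a uniform constant. Taking the supremum over $\|\varphi_\varepsilon\|_{X_\varepsilon}\leq 1$ then shows $\|F_\varepsilon(u_\varepsilon)-F_\varepsilon(v_\varepsilon)\|_{X_\varepsilon'}\leq C\|u_\varepsilon-v_\varepsilon\|_{X_\varepsilon}$ with $C$ independent of $\varepsilon$, which is the uniform Lipschitz claim.

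The main obstacle I anticipate is book-keeping the scaling factors so that all constants are genuinely independent of $\varepsilon$: the concentration estimate~\eqref{1} already packages the delicate $\varepsilon^{\gamma+1}$ normalization against the trace/fractional norm on the $\varepsilon$-dependent fibers $(-1,G_\varepsilon(x))$, so the crux is to make sure the H\"older step is carried out in a way compatible with the $L^p(\omega;W^{s,p})$ structure of $X_\varepsilon$ rather than naively on $\mathcal{O}^\varepsilon$, and that the exponent $s$ with $1-1/p<s<1$ is the one for which~\eqref{1} applies. Once the functional is recast as a pairing against elements of $X_\varepsilon$ and the concentration inequality is applied with the correct exponent, both bounds follow routinely from the uniformity of $C$ in Proposition~\ref{lema_int_conc}.
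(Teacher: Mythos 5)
Your proposal is correct and takes essentially the same approach as the paper: H\"older's inequality with conjugate exponents $p$ and $p'$, the pointwise bounds on $f$ and $f'$ from $(\mathbf{H_f})$, the uniform bound on $|\mathcal{O}^\varepsilon|/\varepsilon^{\gamma+1}$, and the concentration estimates of Proposition \ref{lema_int_conc}. The only cosmetic difference is in part (b), where the paper invokes estimate \eqref{2} to bound the concentrated $L^{p'}$ integral of $u_\varepsilon-v_\varepsilon$ directly by $\|u_\varepsilon-v_\varepsilon\|_{W^{1,p}(\Omega^\varepsilon)}$, whereas you use estimate \eqref{1} together with the uniform embedding into $X_\varepsilon$ --- which is precisely how \eqref{2} is derived, so the two arguments coincide.
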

	\begin{proof}
			$(a)$ For $u_\varepsilon\in X_\varepsilon$ we have
			$\|F_\varepsilon(u_\varepsilon)\|_{X_\varepsilon'}=\sup_{\|u_\varepsilon\|_{X_\varepsilon}=1}|\langle F_\varepsilon(u_\varepsilon),\varphi_\varepsilon \rangle|.$
			Then for $\varepsilon>0$, using $f$ is bounded and Proposition \ref{lema_int_conc},
			\begin{align*}
			\dfrac{1}{\varepsilon^{\gamma+1}}\int_{\mathcal{O}^\varepsilon}|f(u_\varepsilon)\varphi_\varepsilon|  \leq \left(\dfrac{1}{\varepsilon^{\gamma+1}}\int_{\mathcal{O}^\varepsilon}|f(u_\varepsilon)|^q\right)^{\frac{1}{q}}\left(\dfrac{1}{\varepsilon^{\gamma+1}}\int_{\mathcal{O}^\varepsilon}|\varphi_\varepsilon|^p\right)^{\frac{1}{p}} \leq C\sup_{x\in\mathbb{R}}|f(x)|h_1^{\frac{1}{q}}\|\varphi_\varepsilon\|_{X_\varepsilon}
			\end{align*}
			and then,  
			$$\|F_\varepsilon(u_\varepsilon)\|_{X_\varepsilon'}\leq \sup_{x\in\mathbb{R}}|f(x)|h_1^{1/q}\Rightarrow \sup_{u_\varepsilon\in W^{1,p}(\Omega^\varepsilon)}\|F_\varepsilon(u_\varepsilon)\|_{X_\varepsilon'}\leq K$$
			
			$(b)$ Using $f'$ bounded, by Proposition \ref{lema_int_conc}, we obtain, if $q\leq 2$ is the conjugate of $p\geq 2$,
			\begin{equation*}
			\begin{gathered}\dfrac{1}{\varepsilon^{\gamma+1}}\int_{\mathcal{O}^\varepsilon}|f(u_\varepsilon)\varphi_\varepsilon-f(v_\varepsilon)\varphi_\varepsilon|  =\dfrac{1}{\varepsilon^{\gamma+1}}\int_{\mathcal{O}^\varepsilon}|f(u_\varepsilon)-f(v_\varepsilon)||\varphi_\varepsilon| \\  \leq \left(\dfrac{1}{\varepsilon^{\gamma+1}}\int_{\mathcal{O}^\varepsilon}\left[\sup_{x\in\mathbb{R}}|f'(x)|^q\right]|u_\varepsilon - v_\varepsilon|^q\right)^{1/q}\left(\dfrac{1}{\varepsilon^{\gamma+1}}\int_{\mathcal{O}^\varepsilon}|\varphi_\varepsilon|^p\right)^{1/p} 
			\leq C
			\|u_\varepsilon-v_\varepsilon\|_{W^{1,p}(\Omega^\varepsilon)}|||\varphi_\varepsilon\|_{X_\varepsilon}
			\end{gathered}
			\end{equation*}
			Thus there exists $L>0$ such that
			$\|F_\varepsilon(u_\varepsilon)-F_\varepsilon(v_\varepsilon)\|_{X_\varepsilon'}\leq L\|u_\varepsilon-v_\varepsilon\|_{W^{1,p}(\Omega^\varepsilon)}.$
	\end{proof}

	\section{Convergence results} \label{con}
	
		Our next step is to analyze what happens when $\varepsilon$ goes to zero. 
		Since our problems are defined in varying domains $\Omega^\varepsilon$ that depends on $\varepsilon$, we will first set an extension operator which will help us compare functions in a fixed domain $U$ containing $\bar{\Omega}^\varepsilon$ for all $\varepsilon>0$.
		
		\begin{proposition}  \label{extensao}
			The family $\Omega^{\varepsilon}$ admits a continuous extension operator $P_\varepsilon:L^p(\Omega^{\varepsilon})\mapsto L^p(U)$, where the open set $U=U_1\times U_2 \subset \mathbb{R}^n\times\mathbb{R}$ is such that the closure of $\Omega^\varepsilon$ is contained in $U$ for all $\varepsilon>0$, and $\|P_\varepsilon u^\varepsilon\|_{W^{1,p}(U)}\leq C_0\|u^\varepsilon\|_{W^{1,p}(\Omega^{\varepsilon})}$, $\|P_\varepsilon u^\varepsilon\|_{L^p(U_1;W^{s,p}(U_2))}\leq C_s\|u^\varepsilon\|_{L^2(0,1;W^{s,p}(-1,G_\varepsilon(x)))}$ and $\|P_\varepsilon u^\varepsilon\|_{L^p(U)}\leq C_1\|u^\varepsilon\|_{L^p(\Omega^{\varepsilon})}$, 
			where $G_\varepsilon(x)=\varepsilon\psi(x)g(x/\varepsilon)$ and the constants $C_0,C_s,C_1>0$ are independent of $\varepsilon>0$ with $0\leq s\leq 1$.
		\end{proposition}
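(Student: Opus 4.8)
The plan is to build $P_\varepsilon$ as a composition of two operators: a reflection across the oscillating top boundary $y=G_\varepsilon(x)$ that fills the region above the graph, followed by a fixed, $\varepsilon$-independent extension of the smooth cylinder $\omega\times(-1,\delta)$ onto the box $U$. Since $\psi$ is supported in $\omega$ with $0\le\psi\le1$ and $0<g\le g_1$, we have $0\le G_\varepsilon(x)\le\varepsilon g_1$, hence $\Omega=\omega\times(-1,0)\subset\Omega^\varepsilon\subset\omega\times(-1,\varepsilon g_1]$; taking $\varepsilon_0$ small enough and fixing $\delta$ with $\varepsilon_0 g_1<\delta<1$, all the perturbation is confined to the thin strip $0\le y<G_\varepsilon(x)$ near $\Gamma$, so only the top step depends on $\varepsilon$.

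For the first operator I would set $(P^1_\varepsilon u)(x,y)=u(x,y)$ for $-1<y<G_\varepsilon(x)$ and $(P^1_\varepsilon u)(x,y)=u\bigl(x,\,2G_\varepsilon(x)-y\bigr)$ for $G_\varepsilon(x)\le y<\delta$. Because $\delta<1$ and $G_\varepsilon\ge0$, the reflected height $2G_\varepsilon(x)-y$ always lies in $(-1,G_\varepsilon(x))$, so the formula uses only values of $u$ inside $\Omega^\varepsilon$, and the two pieces match continuously along $y=G_\varepsilon(x)$. The crux of the whole argument is that the slope of the top boundary is bounded \emph{uniformly} in $\varepsilon$: differentiating $G_\varepsilon(x)=\varepsilon\psi(x)g(x/\varepsilon)$ gives $\nabla_x G_\varepsilon=\varepsilon(\nabla\psi)\,g(x/\varepsilon)+\psi\,(\nabla g)(x/\varepsilon)$, where the $1/\varepsilon$ produced by the chain rule in the second term is exactly cancelled by the prefactor, so that $\|\nabla_x G_\varepsilon\|_\infty\le\varepsilon_0 g_1\|\nabla\psi\|_\infty+\mathrm{Lip}(g)=:M$ independently of $\varepsilon$.

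Granting this uniform slope bound, the $L^p(U)$ and $W^{1,p}(U)$ estimates follow from the change of variables $y\mapsto 2G_\varepsilon(x)-y$, which for each fixed $x$ is an isometry of $\mathbb{R}$ with unit Jacobian. On the reflected region the chain rule yields $\partial_y(P^1_\varepsilon u)=-(\partial_y u)$ and $\partial_{x_i}(P^1_\varepsilon u)=(\partial_{x_i}u)+2\,\partial_{x_i}G_\varepsilon\,(\partial_y u)$, whence $|\nabla P^1_\varepsilon u|^p\le C(M)\,|\nabla u|^p$ at the reflected point; integrating back over $\Omega^\varepsilon$ produces constants depending only on $M$ and $p$, while the $L^p$ bound is immediate since the reflection preserves the $y$-measure.

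For the anisotropic estimate I would argue slice by slice in $x$: for fixed $x$ the map $y\mapsto 2G_\varepsilon(x)-y$ preserves the Gagliardo $W^{s,p}$ seminorm, so the reflection is a bounded extension from $W^{s,p}(-1,G_\varepsilon(x))$ into $W^{s,p}(U_2)$ with norm bounded uniformly in the reflection point $G_\varepsilon(x)$ for $1-1/p<s<1$, i.e. as long as $s\neq1/p$; integrating in $x$ then gives the bound in the spirit of \cite[Proposition 3.6]{AMA-thin} and improves \cite{Nogueira}. Finally, composing $P^1_\varepsilon$ with a fixed Sobolev extension $E$ of the smooth bounded cylinder $\omega\times(-1,\delta)$ onto $U$ — which simultaneously extends across the flat bottom $y=-1$ and across $\partial\omega$, is entirely $\varepsilon$-independent, and therefore contributes only $\varepsilon$-uniform constants to all three norms — and setting $P_\varepsilon=E\circ P^1_\varepsilon$ completes the construction. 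I expect the main obstacle to be the uniform-in-$\varepsilon$ boundedness of the fractional reflection, since the delicate cross-interaction term in the double integral coupling $\{y<G_\varepsilon(x)\}$ with $\{y>G_\varepsilon(x)\}$ must be controlled by the graph-slope bound $M$ together with the restriction $1-1/p<s<1$.
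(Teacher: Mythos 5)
The paper's own ``proof'' is a one-line citation: it simply invokes the extension operator constructed in \cite[Proposition 2]{Nogueira}. Your proposal reconstructs exactly the standard operator behind that citation, and it is correct: the decisive observation is the one you isolate, namely that $\nabla G_\varepsilon=\varepsilon(\nabla\psi)g(x/\varepsilon)+\psi(\nabla g)(x/\varepsilon)$ stays bounded uniformly in $\varepsilon$ (the chain-rule factor $1/\varepsilon$ is cancelled by the amplitude $\varepsilon$), so the oscillating graphs are uniformly Lipschitz and reflection across $y=G_\varepsilon(x)$, composed with a fixed $\varepsilon$-independent extension of the cylinder, gives all three bounds with $\varepsilon$-uniform constants. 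Two details are worth tightening, though neither is a gap. First, you misplace the difficulty in the fractional estimate: for fixed $x$ the reflection is purely one-dimensional, and the cross term is handled by the elementary inequality $|y-(2G-z)|=(G-y)+(G-z)\geq|y-z|$ for $y,z\leq G$, which makes the slice-wise reflection bounded on $W^{s,p}(-1,G)$ for \emph{every} $0<s<1$, uniformly in the reflection point $G\geq 0$; the slope bound $M$ plays no role there (it only enters the $W^{1,p}$ estimate), and there is no exceptional value of $s$. Second, the assertion that the fixed cylinder extension $E$ ``contributes only $\varepsilon$-uniform constants to all three norms'' is automatic for the $L^p$ and $W^{1,p}$ norms but not for the anisotropic norm $L^p(U_1;W^{s,p}(U_2))$: for that you should construct $E$ slice-preservingly (reflect in $y$ across the flat top and bottom, then extend in $x$ for each fixed $y$), and then use linearity of the $x$-extension together with Fubini to pass the Gagliardo seminorm in $y$ through it.
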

		\begin{proof}
			The proof follows with the extension operator defined in \cite[Proposition 2]{Nogueira}. 
		\end{proof}
	 
		Now, one can prove convergence results concerning to the concentrated integrals.
		
		\begin{proposition} \label{conv_produto_b}
			Let $U\subset\mathbb{R}^{n+1}$ an open set such that $\Omega^{\varepsilon}\subset U$ for all $\varepsilon>0$. Then,
			\begin{equation*}
			\dfrac{1}{\varepsilon^{\gamma+1}}\int_{\mathcal{O}^\varepsilon}u(x,y)\varphi(x,y)dydx \longrightarrow \int_\Gamma\mu u\varphi dS, \quad \textrm{ as } \varepsilon\to0,
			\end{equation*}
			for any $u,\varphi\in W^{1,p}(U)$ where $\mu(\cdot)$ is given by \eqref{mu} and $\Gamma=\omega\times\{0\}$.
		\end{proposition}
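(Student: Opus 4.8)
The plan is to collapse the thin strip $\mathcal{O}^\varepsilon$ onto $\Gamma$ by Fubini's theorem in the vertical variable, to recognize the renormalized vertical thickness as the rapidly oscillating weight $h(x,x/\varepsilon^\beta)$, and then to average this weight against its periodic mean $\mu$. Since the bilinear form $B_\varepsilon(u,\varphi):=\varepsilon^{-(\gamma+1)}\int_{\mathcal{O}^\varepsilon}u\varphi\,dy\,dx$ is, by Propositions \ref{lema_int_conc} and \ref{extensao} (via Hölder with exponents $p$ and $p'=p/(p-1)\le p$, estimates \eqref{1}--\eqref{2}, and the extension bounds), bounded on $W^{1,p}(U)\times W^{1,p}(U)$ uniformly in $\varepsilon$, and since the candidate limit $(u,\varphi)\mapsto\int_\Gamma\mu\,u\varphi\,dS$ is continuous on $W^{1,p}(U)\times W^{1,p}(U)$ by the trace theorem (with $\mu$ bounded), it suffices to prove the convergence for $u,\varphi$ in the dense subspace $C^\infty(\overline U)$ and then pass to the general case by a $3\delta$-argument.

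First I would fix $u,\varphi\in C^\infty(\overline U)$ and use Fubini to write
$$\frac{1}{\varepsilon^{\gamma+1}}\int_{\mathcal{O}^\varepsilon}u\varphi\,dy\,dx=\int_\omega\frac{1}{\varepsilon^{\gamma+1}}\left(\int_{a_\varepsilon(x)}^{b_\varepsilon(x)}u(x,y)\varphi(x,y)\,dy\right)dx,$$
where, by \eqref{strip}, $b_\varepsilon(x)=\varepsilon\psi(x)g(x/\varepsilon)$ and $a_\varepsilon(x)=b_\varepsilon(x)-\varepsilon^{\gamma+1}h(x,x/\varepsilon^\beta)$, so the interval has length $\varepsilon^{\gamma+1}h(x,x/\varepsilon^\beta)$ and lies within distance $\varepsilon g_1$ of $\{y=0\}$. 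Because $u\varphi$ is Lipschitz on the compact set $\overline U$, replacing $u(x,y)\varphi(x,y)$ by the trace value $u(x,0)\varphi(x,0)$ costs $O(\varepsilon)$ uniformly in $x$; multiplying by the bounded weight $h$ and integrating over the bounded set $\omega$ keeps the total error $O(\varepsilon)$. Hence
$$\frac{1}{\varepsilon^{\gamma+1}}\int_{\mathcal{O}^\varepsilon}u\varphi\,dy\,dx=\int_\omega h(x,x/\varepsilon^\beta)\,u(x,0)\varphi(x,0)\,dx+O(\varepsilon).$$

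The heart of the argument is the averaging of the oscillatory weight. Since $h(x,\cdot)$ is periodic on $[0,1]^n$ and $h$ is $C^1$ (hence continuous and bounded in the slow variable), the family $x\mapsto h(x,x/\varepsilon^\beta)$ converges weakly-$*$ in $L^\infty(\omega)$ to its cell average $\mu(x)=\int_{[0,1]^n}h(x,s)\,ds$ of \eqref{mu}; this is the standard Riemann--Lebesgue / periodic averaging lemma, and it is here that the scale separation (the genuine oscillation produced for $\beta>0$) is used. Testing this weak convergence against the fixed $L^1(\omega)$ function $x\mapsto u(x,0)\varphi(x,0)$ gives
$$\int_\omega h(x,x/\varepsilon^\beta)\,u(x,0)\varphi(x,0)\,dx\longrightarrow\int_\omega\mu(x)\,u(x,0)\varphi(x,0)\,dx=\int_\Gamma\mu\,u\varphi\,dS,$$
which, combined with the $O(\varepsilon)$ error above, settles the smooth case.

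Finally I would remove the smoothness: given $u,\varphi\in W^{1,p}(U)$ and $\delta>0$, pick $u_\delta,\varphi_\delta\in C^\infty(\overline U)$ within $\delta$ in $W^{1,p}(U)$; the uniform bilinear bound controls $|B_\varepsilon(u,\varphi)-B_\varepsilon(u_\delta,\varphi_\delta)|$, the trace-continuity of the limit controls the corresponding difference of limits, and the smooth case controls $B_\varepsilon(u_\delta,\varphi_\delta)$ for small $\varepsilon$, so letting $\varepsilon\to0$ and then $\delta\to0$ closes the argument. The main obstacle I anticipate is the averaging step: one must make the weak convergence $h(\cdot,\cdot/\varepsilon^\beta)\rightharpoonup\mu$ rigorous uniformly in the slow variable, relying on the joint regularity and periodicity in $\mathbf{H_T}$ and on the fast scale; by contrast, the vertical collapse and the density reduction are comparatively routine once the uniform bound from Propositions \ref{lema_int_conc} and \ref{extensao} is in hand.
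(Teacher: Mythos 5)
Your proposal is correct and follows essentially the same route as the paper: reduce to smooth functions by density, collapse the strip onto $\Gamma$ using that its vertical extent is $O(\varepsilon)$ (the paper does this via an explicit change of variables flattening $\mathcal{O}^\varepsilon$ onto $\omega\times(0,1)$, which is the same computation as your Fubini-plus-Lipschitz estimate), and conclude with the periodic averaging of $h(\cdot,\cdot/\varepsilon^\beta)$ toward $\mu$. Your write-up is in fact more explicit than the paper's on the two points it leaves implicit, namely the uniform bilinear bound justifying the density reduction and the weak-$*$ averaging lemma for the oscillating weight.
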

		
		\begin{proof}
			Due to \cite[Theorem 1.4.2.1]{grisvard}, we know that
			$C^\infty_c(\bar{U}):=\{u\in C^\infty(U); \ u=v_{|_{U}}, \text{ com } v\in C_c^\infty(\mathbb{R}^{n+1}) \}$
			is dense in $W^{1,p}(U)$ and we can assume $u,\varphi\in C^\infty_c(\bar{U})$. Consider $G_\varepsilon(x)=\varepsilon\psi(x)g(x/\varepsilon)$ and $H_\varepsilon(x)=h(x,x/\varepsilon^\beta)$. Then, performing the change of variables
			$$y_1=x, \quad y_2=(y-G_\varepsilon(x)+\varepsilon^{\gamma+1} H_\varepsilon(x))/({\varepsilon^{\gamma+1} H_\varepsilon(x)}),$$
			we get
			\begin{align*}
				&\dfrac{1}{\varepsilon^{\gamma+1}}\int_{\mathcal{O}^\varepsilon}u(x,y)\varphi(x,y)dydx=\dfrac{1}{\varepsilon^{\gamma+1}}\int_\omega\int_{G_\varepsilon(x)-\varepsilon^{\gamma+1} H_\varepsilon(x)}^{G_\varepsilon(x)}u(x,y)\varphi(x,y)dydx\\ =& \int_\omega\int_0^1 u(y_1,G_\varepsilon(y_1)-\varepsilon^{\gamma+1} H_\varepsilon(y_1)(1-y_2))\varphi(y_1,G_\varepsilon(y_1)-\varepsilon^{\gamma+1} H_\varepsilon(y_1)(1-y_2))H_\varepsilon(y_1)dy_2dy_1 \\ = & \int_\omega\int_0^1(u(y_1,G_\varepsilon(y_1)-\varepsilon^{\gamma+1} H_\varepsilon(y_1)(1-y_2))-u(y_1,0))
				\varphi(y_1,G_\varepsilon(y_1)-\varepsilon^{\gamma+1} H_\varepsilon(y_1)(1-y_2))H_\varepsilon(y_1)dy_2dy_1 \\ & + \int_\omega\int_0^1u(y_1,0)(\varphi(y_1,G_\varepsilon(y_1)-\varepsilon^{\gamma+1} H_\varepsilon(y_1)(1-y_2))-\varphi(y_1,0))H_\varepsilon(y_1)dy_2dy_1
				\\ &   + \int_\omega\int_0^1u(y_1,0)\varphi(y_1,0)(H_\varepsilon(y_1)-\mu(y_1))dy_2dy_1 
				+ \int_\omega\mu(y_1) u(y_1,0)\varphi(y_1,0)dy_1 
				\end{align*}
				When $\varepsilon\to0$, since $G_\varepsilon\to 0$ uniformly, we obtain 
				%
			\begin{equation*}
			\dfrac{1}{\varepsilon^{\gamma+1}}\int_{\mathcal{O}^\varepsilon}u(x,y)\varphi(x,y)dydx \to \int_\omega\mu(y_1)u(y_1,0)\varphi(y_1,0)dy_1 = \int_\Gamma\mu u \varphi dS.
			\end{equation*}
		\end{proof}
				
		We also have similar results concerning to nonlinearity $f$. The proof is analogous.
		\begin{corollary}\label{conv_produto2}
			Let $U\subset\mathbb{R}^{n+1}$ an open set such that $\Omega^{\varepsilon}\subset U$ for all $\varepsilon>0$. If $u,\varphi\in W^{1,p}(U)$ and $f : \mathbb{R}\to\mathbb{R}$ is a bounded function of class $C^1$, with bounded derivative, then
			\begin{align*}
			\dfrac{1}{\varepsilon^{\gamma+1}}\int_{\mathcal{O}^\varepsilon}f(u(x,y))\varphi(x,y)dydx \to \int_\Gamma\mu f(u) \varphi dS,
			\end{align*}
			as $\varepsilon\to0$, where $\mu(\cdot)$ is given by \eqref{mu}  and $\Gamma=\omega\times\{0\}$.
		\end{corollary}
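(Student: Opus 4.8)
The plan is to reduce the corollary to Proposition \ref{conv_produto_b} by composing with $f$, rather than repeating the change of variables. We may assume $U$ bounded (it contains the uniformly bounded family $\Omega^\varepsilon$, and only a fixed bounded neighbourhood of $\Gamma$ is ever relevant). Since $f$ is $C^1$ with bounded derivative it is globally Lipschitz, so the composition $w:=f\circ u$ belongs to $W^{1,p}(U)$: indeed $w$ is bounded, hence in $L^p(U)$, and the chain rule for Sobolev functions gives $\nabla w=f'(u)\nabla u\in L^p(U)$ because $f'\in L^\infty$. Applying Proposition \ref{conv_produto_b} to the pair $(w,\varphi)\in W^{1,p}(U)\times W^{1,p}(U)$ then yields
\[
\frac{1}{\varepsilon^{\gamma+1}}\int_{\mathcal{O}^\varepsilon} f(u)\varphi\, dy\, dx
=\frac{1}{\varepsilon^{\gamma+1}}\int_{\mathcal{O}^\varepsilon} w\varphi\, dy\, dx
\longrightarrow \int_\Gamma \mu\,(w|_\Gamma)\,\varphi\, dS \quad \textrm{as } \varepsilon\to 0.
\]

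The one point that requires justification is that the boundary trace commutes with $f$, i.e. $w|_\Gamma=(f\circ u)|_\Gamma=f(u|_\Gamma)$, so that the limit is exactly $\int_\Gamma \mu f(u)\varphi\, dS$. I would establish this by density: choose $u_k\in C_c^\infty(\bar U)$ with $u_k\to u$ in $W^{1,p}(U)$. For smooth functions the identity $(f\circ u_k)|_\Gamma=f(u_k|_\Gamma)$ is immediate. On the left, $f(u_k)\to f(u)$ in $W^{1,p}(U)$ — boundedness of $f$ gives the $L^p$ convergence, while continuity and boundedness of $f'$, together with an a.e.-convergent subsequence and dominated convergence, give $f'(u_k)\nabla u_k\to f'(u)\nabla u$ in $L^p(U)$ — so continuity of the trace operator yields $(f\circ u_k)|_\Gamma\to(f\circ u)|_\Gamma$ in $L^p(\Gamma)$. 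On the right, $u_k|_\Gamma\to u|_\Gamma$ in $L^p(\Gamma)$ and $f$ Lipschitz give $f(u_k|_\Gamma)\to f(u|_\Gamma)$ in $L^p(\Gamma)$. Matching the two limits gives $w|_\Gamma=f(u|_\Gamma)$, which completes the argument.

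I expect this trace-commutation step to be the only genuine obstacle; everything else is a direct appeal to Proposition \ref{conv_produto_b}. Alternatively, one may mirror the proof of that proposition verbatim: reduce to $u,\varphi\in C_c^\infty(\bar U)$, perform the same change of variables $y_2=(y-G_\varepsilon(x)+\varepsilon^{\gamma+1}H_\varepsilon(x))/(\varepsilon^{\gamma+1}H_\varepsilon(x))$, and split the integrand into the analogous four pieces, the only new feature being that the first difference is now $f(u(\cdot))-f(u(\cdot,0))$, controlled by $\|f'\|_\infty\,|u(\cdot)-u(\cdot,0)|\to 0$ uniformly as $G_\varepsilon\to 0$; the term $\int_\omega f(u(y_1,0))\varphi(y_1,0)(H_\varepsilon-\mu)\,dy_1$ vanishes by the weak convergence $H_\varepsilon\rightharpoonup\mu$, and the passage from smooth $u,\varphi$ to general $u,\varphi\in W^{1,p}(U)$ is justified by the uniform-in-$\varepsilon$ Lipschitz bounds of Proposition \ref{lema_int_conc}, exactly as in Proposition \ref{f}.
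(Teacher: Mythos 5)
Your main argument is correct, but it takes a genuinely different route from the paper's. The paper's entire proof of Corollary \ref{conv_produto2} is the sentence ``the proof is analogous'': i.e.\ repeat the change-of-variables computation of Proposition \ref{conv_produto_b} with $f(u)$ in place of $u$, using $|f(u(\cdot))-f(u(\cdot,0))|\le \|f'\|_\infty |u(\cdot)-u(\cdot,0)|$ to control the new difference term and the uniform bounds of Proposition \ref{lema_int_conc} to pass from smooth to general $u,\varphi$ --- which is precisely the alternative you sketch in your closing paragraph. Your primary argument instead uses Proposition \ref{conv_produto_b} as a black box: since $f$ is $C^1$ with bounded derivative, the Sobolev chain rule gives $w=f\circ u\in W^{1,p}$, the proposition applied to $(w,\varphi)$ yields the limit $\int_\Gamma \mu\,(w|_\Gamma)\,\varphi\,dS$, and the identification $(f\circ u)|_\Gamma=f(u|_\Gamma)$ follows from your density/trace-continuity argument. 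The two non-trivial points this route creates are both handled correctly by you: the reduction to bounded $U$ (genuinely necessary --- $f$ bounded does not place $f\circ u$ in $L^p$ of an unbounded set, e.g.\ $f\equiv 1$), and the commutation of trace with composition. What your route buys is that the oscillatory geometry is quarantined entirely inside Proposition \ref{conv_produto_b}, with no repetition of the change of variables; what it costs is invoking two standard but not-free Sobolev facts (chain rule for Lipschitz $C^1$ compositions, trace commutation) that the paper's computational route never needs, since there $f$ is composed with smooth approximants only. One harmless slip: the $L^p(U)$ convergence $f(u_k)\to f(u)$ follows from the Lipschitz bound $\|f(u_k)-f(u)\|_{L^p}\le \|f'\|_\infty\|u_k-u\|_{L^p}$ (or from boundedness plus dominated convergence along an a.e.\ convergent subsequence), not from boundedness of $f$ alone as stated.
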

%
	Next, we analyze convergence in concentrated integrals.
	
	\begin{proposition}\label{prop5.1}
			Let $w^\varepsilon$, $u^\varepsilon\in W^{1,p}(\Omega_\varepsilon)$ and $w,u\in W^{1,p}(U)$ such that $P_{\varepsilon} u^\varepsilon \rightharpoonup u$ and $P_{\varepsilon} w^\varepsilon \rightharpoonup w$ in $W^{1,p}(U)$ where $P_{\varepsilon}$ is the extension operator given by Proposition \ref{extensao}. Then
		\begin{equation*}
		\dfrac{1}{\varepsilon^{\gamma+1}}\int_{\mathcal{O}^\varepsilon}f(u^\varepsilon)w^\varepsilon \to \int_\Gamma\mu f(u) w dS,
		\end{equation*}
		where $\mu(\cdot)$ is given by \eqref{mu} and $\Gamma=\omega\times\{0\}$.
	\end{proposition}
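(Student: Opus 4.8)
The plan is to reduce to the fixed-function statement already proved in Corollary \ref{conv_produto2} by a telescoping argument, controlling the two error terms with the concentration estimates of Proposition \ref{lema_int_conc} together with the uniform extension bounds of Proposition \ref{extensao}. First I would write
\[
\frac{1}{\varepsilon^{\gamma+1}}\int_{\mathcal{O}^\varepsilon} f(u^\varepsilon)w^\varepsilon - \int_\Gamma \mu f(u)w\,dS = I_\varepsilon + II_\varepsilon + III_\varepsilon,
\]
with
\[
I_\varepsilon = \frac{1}{\varepsilon^{\gamma+1}}\int_{\mathcal{O}^\varepsilon}\bigl(f(u^\varepsilon)-f(u)\bigr)w^\varepsilon,\qquad II_\varepsilon = \frac{1}{\varepsilon^{\gamma+1}}\int_{\mathcal{O}^\varepsilon}f(u)\bigl(w^\varepsilon-w\bigr),
\]
\[
III_\varepsilon = \frac{1}{\varepsilon^{\gamma+1}}\int_{\mathcal{O}^\varepsilon}f(u)w - \int_\Gamma \mu f(u)w\,dS.
\]
Since $\mathcal{O}^\varepsilon\subset\Omega^\varepsilon$ and the extension operator reproduces the original function on $\Omega^\varepsilon$, we have $u^\varepsilon=P_\varepsilon u^\varepsilon$ and $w^\varepsilon=P_\varepsilon w^\varepsilon$ on $\mathcal{O}^\varepsilon$; in particular $u^\varepsilon-u=P_\varepsilon u^\varepsilon-u$ and $w^\varepsilon-w=P_\varepsilon w^\varepsilon-w$ there, so every difference above is the restriction to $\mathcal{O}^\varepsilon$ of a genuine element of $W^{1,p}(U)$, and the concentration estimates \eqref{1}--\eqref{2} apply to it. The term $III_\varepsilon\to0$ is exactly Corollary \ref{conv_produto2} applied to the fixed pair $u,w\in W^{1,p}(U)$.

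For $I_\varepsilon$ I would use that $f$ is Lipschitz with constant $L$ (bounded derivative, by $(\mathbf{H_f})$) and Hölder's inequality with conjugate exponents $p$ and $p'=p/(p-1)\le 2$, obtaining
\[
|I_\varepsilon| \leq L\left(\frac{1}{\varepsilon^{\gamma+1}}\int_{\mathcal{O}^\varepsilon}|P_\varepsilon u^\varepsilon - u|^{p'}\right)^{1/p'}\left(\frac{1}{\varepsilon^{\gamma+1}}\int_{\mathcal{O}^\varepsilon}|w^\varepsilon|^p\right)^{1/p}.
\]
The second factor is bounded uniformly in $\varepsilon$ by \eqref{2} and the extension bound, because $\|w^\varepsilon\|_{W^{1,p}(\Omega^\varepsilon)}\le\|P_\varepsilon w^\varepsilon\|_{W^{1,p}(U)}$ and a weakly convergent sequence is bounded. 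The first factor is the decisive one: by \eqref{1} it is controlled by $C\,\|P_\varepsilon u^\varepsilon - u\|_{L^p(\omega;W^{s,p}(-1,G_\varepsilon(x)))}\le C\,\|P_\varepsilon u^\varepsilon - u\|_{L^p(U_1;W^{s,p}(U_2))}$, and this tends to $0$ since $W^{1,p}(U)$ embeds \emph{compactly} into $L^p(U_1;W^{s,p}(U_2))$ for $1-1/p<s<1$, so the weak convergence $P_\varepsilon u^\varepsilon\rightharpoonup u$ upgrades to strong convergence in this intermediate space.

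For $II_\varepsilon$, since $f$ is bounded I would estimate $|II_\varepsilon|\le\|f\|_\infty\,\varepsilon^{-(\gamma+1)}\int_{\mathcal{O}^\varepsilon}|P_\varepsilon w^\varepsilon-w|$, and then a single Hölder step together with $|\mathcal{O}^\varepsilon|\lesssim\varepsilon^{\gamma+1}$ reduces matters to the same quantity $\varepsilon^{-(\gamma+1)}\int_{\mathcal{O}^\varepsilon}|P_\varepsilon w^\varepsilon-w|^{p'}$, which vanishes by the compactness argument applied now to $P_\varepsilon w^\varepsilon\rightharpoonup w$. Combining the three limits gives the claim. The main obstacle, and the only nonroutine point, is the compact embedding $W^{1,p}(U)\hookrightarrow\hookrightarrow L^p(U_1;W^{s,p}(U_2))$ that converts weak into strong convergence; this is precisely why the range $1-1/p<s<1$ is imposed, being exactly the scale on which the concentration estimate \eqref{1} holds while remaining a subcritical, compact trace-type scale. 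Everything else is Hölder's inequality together with the uniform bounds already assembled in Propositions \ref{lema_int_conc} and \ref{extensao}.
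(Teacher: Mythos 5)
Your proposal is correct and follows essentially the same route as the paper: the same telescoping decomposition (modulo which factor carries each difference), H\"older plus the concentration estimates of Proposition \ref{lema_int_conc} for the two error terms, and Corollary \ref{conv_produto2} for the fixed-function term. The only notable difference is that you state explicitly the compact embedding $W^{1,p}(U)\hookrightarrow\hookrightarrow L^p(U_1;W^{s,p}(U_2))$, $1-1/p<s<1$, that upgrades the weak convergences to strong convergence in the intermediate space --- a step the paper uses but leaves implicit when it writes that $\|P_\varepsilon w^\varepsilon-w\|_{X_U}$ and $\|P_\varepsilon u^\varepsilon-u\|_{X_U}$ tend to zero.
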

	\begin{proof}
		Considering the extension operator $P_\varepsilon$ of Proposition \ref{extensao}, one gets by Corollary \ref{conv_produto2}
		\begin{align*}
		&\left|\dfrac{1}{\varepsilon^{\gamma+1}}\int_{\mathcal{O}^\varepsilon}f(u^\varepsilon)w^\varepsilon - \int_\Gamma\mu f(u)wdS\right| \leq
		\left|\dfrac{1}{\varepsilon^{\gamma+1}}\int_{\mathcal{O}^\varepsilon}f(u^\varepsilon)(w^\varepsilon-w)\right| \\ & \qquad  + \left|\dfrac{1}{\varepsilon^{\gamma+1}}\int_{\mathcal{O}^\varepsilon}(f(u^\varepsilon)-f(u))w\right|  + \left|\dfrac{1}{\varepsilon^{\gamma+1}}\int_{\mathcal{O}^\varepsilon}f(u)w - \int_\Gamma\mu f(u) w dS\right| \\
		\leq & \left( \dfrac{1}{\varepsilon^{\gamma+1}}\int_{\mathcal{O}^\varepsilon}|f(u^\varepsilon)|^q\right)^{\frac{1}{q}}\left( \dfrac{1}{\varepsilon^{\gamma+1}}\int_{\mathcal{O}^\varepsilon}|w^\varepsilon-w|^p\right)^{\frac{1}{p}} + \left( \dfrac{1}{\varepsilon^{\gamma+1}}\int_{\mathcal{O}^\varepsilon}|f(u^\varepsilon)-f(u)|^q\right)^{\frac{1}{q}}\left( \dfrac{1}{\varepsilon^{\gamma+1}}\int_{\mathcal{O}^\varepsilon}|w|^p\right)^{\frac{1}{p}} 
		\\& \qquad + \left|\dfrac{1}{\varepsilon^{\gamma+1}}\int_{\mathcal{O}^\varepsilon}f(u)w -\int_\Gamma\mu f(u) w dS\right| \\ \leq & \sup_{x\in\mathbb{R}}|f(x)|h_1^{\frac{1}{q}}\|w^\varepsilon-w\|_{X_\varepsilon}+\sup_{x\in\mathbb{R}}|f'(x)| \|u^\varepsilon-u\|_{X_\varepsilon}\|w\|_{W^{1,p}(\Omega^\varepsilon)} 
		+\left| \dfrac{1}{\varepsilon^{\gamma+1}}\int_{\mathcal{O}^\varepsilon}f(u)w-\int_\Gamma\mu f(u) w dS\right|  \\ & \leq K_1\|P_\varepsilon w^\varepsilon-w\|_{X_U}+K_2 \|P_\varepsilon u^\varepsilon-u\|_{X_U}\|w\|_{W^{1,p}(U)}+\left| \dfrac{1}{\varepsilon^{\gamma+1}}\int_{\mathcal{O}^\varepsilon}f(u)w-\int_\Gamma\mu f(u) w dS\right| \to 0.
		\end{align*}
	\end{proof}
	
	Finally, we show our main result.
	\begin{proof}[Proof of Theorem \ref{main}]
		
		\noindent\textit{(a) Uniform bound of solutions:}
		Take $\varphi=u_\varepsilon$ as a test function in \eqref{variational_l}. By Proposition \ref{f} (a), there exists $K>0$ such that
		$$
		\|u_\varepsilon\|_{W^{1,p}(\Omega^\varepsilon)}^p=\dfrac{1}{\varepsilon^{\gamma+1}}\int_{\mathcal{O}^\varepsilon}f(u_\varepsilon)u_\varepsilon dxdy\leq \sup_{v\in W^{1,p}(\Omega^\varepsilon)}\|F_\varepsilon(v)\|_{X_\varepsilon'}\leq K,
		$$
		which means that $u_\varepsilon$ is uniformly bounded in $W^{1,p}(\Omega^\varepsilon)$. 
		
		\noindent\textit{(b) Limiting problem:}
		Notice that we can rewrite our domain $\Omega^\varepsilon$ as $\Omega^\varepsilon=\mbox{int}(\overline{\Omega}\cup \overline{R^\varepsilon}),$
		where 
		$$
		R^\varepsilon=\left\lbrace (x,y)\in \mathbb{R}^{n+1} \, : \, x \in \omega, 0<y<\varepsilon \, \psi(x) \, g\left(\dfrac{x}{\varepsilon}\right)\right\rbrace
		$$
		and $\Omega=\omega\times (-1,0)$. Then, one can rewrite \eqref{variational_l} as
		\begin{equation}\label{variational_ldecomp}
		\begin{gathered}
		\int_{\Omega} \{|\nabla u_\varepsilon|^{p-2}\nabla u_\varepsilon\nabla \varphi+|u_\varepsilon|^{p-2}u_\varepsilon\varphi \}dxdy+\int_{R^\varepsilon} \{|\nabla u_\varepsilon|^{p-2}\nabla u_\varepsilon\nabla \varphi+|u_\varepsilon|^{p-2}u_\varepsilon\varphi \}dxdy\\=\dfrac{1}{\varepsilon^{\gamma+1}}\int_{\mathcal{O}^\varepsilon}f(u_\varepsilon) \varphi dxdy, \quad \varphi\in W^{1,p}(\Omega^\varepsilon).
		\end{gathered}
		\end{equation}
		
		Since $u_\varepsilon$ is uniformly bounded in $W^{1,p}(\Omega^\varepsilon)$, we have that $u_\varepsilon$ restricted to $\Omega$ is uniformly bounded in $W^{1,p}(\Omega)$. 
		Therefore, there is $u\in W^{1,p}(\Omega)$ such that, up to a subsequence,
		$$
		u_\varepsilon|_\Omega\rightharpoonup u\quad \mbox{ weakly in }\quad W^{1,p}(\Omega)\quad \mbox{ and strongly in }\quad L^p(\Omega).
		$$
		Notice that from the above convergence one can prove that  $|u_{\varepsilon}|^{p-2}u_{\varepsilon}\to |u|^{p-2}u$ strongly in $L^{p'}(\Omega)$. Furthermore, the integral over $R^\varepsilon$ converges to zero, because $R^\varepsilon$ is a thin domain and $u_\varepsilon$ is uniformly bounded. Moreover, by Proposition \ref{extensao}, there is $u^*\in W^{1,p}(U)$ such that, up to a subsequence, 
		\begin{equation}\label{Puepsilonconv}
		P_\varepsilon u_\varepsilon\rightharpoonup u^* \textrm{ weakly in }W^{1,p}(U)\textrm{ and strongly in }L^p(U).
		\end{equation}
		It is not difficult to see that 
		$
		u^*|_\Omega = u \textrm{ in }\Omega . 
		$ 
		
		Now, we identify the limit problem. For this sake, let $v\in W^{1,p}(\Omega)$. Using the monotonicity of $|\cdot|^{p-2}\cdot$ we have
		\begin{equation} \label{ineq}
		0\leq\int_{\Omega}(|\nabla u_\varepsilon|^{p-2}\nabla u_\varepsilon-|\nabla v|^{p-2}\nabla v)(\nabla u_\varepsilon-\nabla v).
		\end{equation} 
		Taking test functions as $\varphi=u_\varepsilon-P_{\varepsilon}v$, we can rewrite \eqref{variational_ldecomp} as
		\begin{equation*}
		\begin{gathered}
		\int_{\Omega}|\nabla u_\varepsilon|^{p-2}\nabla u_\varepsilon(\nabla u_\varepsilon-\nabla v) =\dfrac{1}{\varepsilon^{\gamma+1}}\int_{\mathcal{O}^\varepsilon}f(u_\varepsilon) (u_\varepsilon-P_\varepsilon v) dxdy\qquad\qquad\qquad \\ -\int_{\Omega} |u_\varepsilon|^{p-2}u_\varepsilon(u_\varepsilon-v) dxdy-\int_{R^\varepsilon} \{|\nabla u_\varepsilon|^{p-2}\nabla u_\varepsilon\nabla (u_\varepsilon-P_\varepsilon v)+|u_\varepsilon|^{p-2}u_\varepsilon(u_\varepsilon-P_\varepsilon v) \}dxdy.
		\end{gathered}
		\end{equation*}
		From \eqref{Puepsilonconv} and Proposition \ref{prop5.1}, we get the right hand side of the above expression converges to
		$$
		\int_{\Gamma} \mu f(u)(u-v)dS-\int_\Omega|u|^{p-2}u(u-v) dxdy.
		$$
		
		On the other hand,
		\begin{equation*}
		\int_{\Omega}|\nabla v|^{p-2}\nabla v(\nabla u_\varepsilon-\nabla v)\to \int_{\Omega}|\nabla v|^{p-2}\nabla v(\nabla u-\nabla v).
		\end{equation*}
		Thus,
		\begin{equation}\label{convaux}
		\begin{gathered}
		\int_{\Omega}(|\nabla u_\varepsilon|^{p-2}\nabla u_\varepsilon-|\nabla v|^{p-2}\nabla v)(\nabla u_\varepsilon-\nabla v)\\\to \int_{\Gamma}\mu f(u)(u-v)dS-\int_\Omega|u|^{p-2}u(u-v) dxdy-\int_{\Omega}|\nabla v|^{p-2}\nabla v(\nabla u-\nabla v),
		\end{gathered}
		\end{equation} 
		and by \eqref{ineq}, for any $v\in W^{1,p}(\Omega)$, 
		\begin{equation}\label{ineqhom}
		0\leq \int_{\Gamma}\mu f(u)(u-v)dS-\int_\Omega|u|^{p-2}u(u-v) dxdy-\int_{\Omega}|\nabla v|^{p-2}\nabla v(\nabla u-\nabla v).
		\end{equation}

		Now, let us take $v=u+\lambda \varphi$ for $\lambda>0$ and $\varphi\in W^{1,p}(\Omega)$ in \eqref{ineqhom}. If we divide it by $-\lambda$, we get
		$$
		0\geq \int_{\Gamma}\mu f(u)\varphi dS-\int_\Omega|u|^{p-2}u\varphi dxdy-\int_{\Omega}|\nabla u-\lambda\nabla\varphi|^{p-2}(\nabla u-\lambda\nabla\varphi)\nabla \varphi.
		$$
		Hence, if $\lambda\to0$ we get
		$$
		0\geq \int_{\Gamma}\mu f(u)\varphi dS-\int_\Omega|u|^{p-2}u\varphi dxdy-\int_{\Omega}|\nabla u|^{p-2}\nabla u\nabla \varphi.
		$$
		
		Proceeding in a similar way, we can get the reverse inequality for $v=u-\lambda \varphi$ getting the limit problem  
		$$
		0=\int_{\Gamma}\mu f(u)\varphi dS-\int_\Omega|u|^{p-2}u\varphi dxdy-\int_{\Omega}|\nabla u|^{p-2}\nabla u\nabla \varphi,\quad \forall\varphi\in W^{1.p}(\Omega).
		$$
		
		\noindent\textit{(c) Strong convergence:}
		Using the monotonicity of $|\cdot|^{p-2}\cdot$, there is a constant $c_{p}>0$ depending only on $p$ such that
		\begin{equation*}
		\|u_\varepsilon-u\|_{W^{1,p}(\Omega)}^p\leq c_p \int_{\Omega}(|\nabla u_\varepsilon|^{p-2}\nabla u_\varepsilon-|\nabla u|^{p-2}\nabla u)(\nabla u_\varepsilon-\nabla u).
		\end{equation*}
		By \eqref{convaux} with $v=u$, one gets  
		$
		\|u_\varepsilon-u\|_{W^{1,p}(\Omega)}\to 0.
		$
	\end{proof}

\end{document}